\providecommand{\keywords}[1]{\textbf{\textit{Keywords: }} #1}
\theoremstyle{plain}
\newtheorem{satz}{Theorem}\numberwithin{satz}{section} 
\newtheorem{lemma}[satz]{Lemma}\numberwithin{satz}{section} 
\numberwithin{satz}{section} 
\newtheorem{kor}[satz]{Corollary}\numberwithin{satz}{section} 
\theoremstyle{definition}
\newtheorem{defi}{Definition}\numberwithin{defi}{section} 
\theoremstyle{remark}
\newtheorem{conj}{Conjecture}\numberwithin{conj}{section}
\newtheorem{remark}{Remark}\numberwithin{remark}{section}
\newcommand{\cc}{{\mathbb{C}}}   
\newcommand{\nn}{{\mathbb{N}}}   
\begin{document}
\title{A note on the product of two permutations of prescribed orders}
\author{J.\ K\"onig\thanks{Universit\"at W\"urzburg, Emil-Fischer-Str.\ 30, 97074 W\"urzburg, Germany. email: joachim.koenig@mathematik.uni-wuerzburg.de}}
\maketitle
\begin{abstract}{We prove a conjecture by Stefan Kohl on the existence of triples of permutations of bounded degree with prescribed orders and product $1$.\\
More precisely, let $a,b,c$ be integers, all $\ge 2$. Then there exist elements $x,y,z\in S_{c+2}$ of orders $a$, $b$ and $c$ respectively, with $xyz=1$.\\
This result leads to an existence result for covers of the complex projective line with bounded degree and prescribed ramification indices.}
\end{abstract}
\keywords{Permutations; covering maps; Hurwitz existence problem}
\section{Tuples of permutations of prescribed order with product 1}
In \cite[Problem 18.49]{Kourovka}, S.\ Kohl conjectured the following:
\begin{conj}
\label{kohl}
Given $n\in \nn$ and $1< a,b,c\le n-2$, then there exist elements $x,y$ of $S_n$ such that $x$ has order $a$, $y$ has order $b$ and $xy$ has order $c$.\end{conj}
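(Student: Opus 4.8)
\emph{Reduction.} First observe that the three conditions are completely symmetric in $a,b,c$: from a pair $x,y$ with $\mathrm{ord}(x)=a$, $\mathrm{ord}(y)=b$, $\mathrm{ord}(xy)=c$ one obtains $(xy,\,y^{-1})$ realizing the orders $(c,b)$ with product $x$ of order $a$, and $(y^{-1},\,x^{-1})$ realizing $(b,a)$ with product $(xy)^{-1}$ of order $c$; composing these relabellings, one may assume $2\le a\le b\le c$. Since adjoining fixed points changes no order, it then suffices to solve the problem in $S_{c+2}$, the case of general $n\ge c+2$ following from $S_{c+2}\hookrightarrow S_n$. Finally, putting $z:=(xy)^{-1}$ the goal becomes: find $x,y,z\in S_{c+2}$ of orders $a,b,c$ with $xyz=1$, i.e.\ factor a conveniently chosen order-$c$ permutation as (order $a$)$\,\cdot\,$(order $b$).

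\emph{Construction.} I would distinguish two regimes. If $c\le a+b-1$, then (using $a\le b\le c$) one can choose an $a$-cycle and a $b$-cycle whose supports meet in $a+b-c$ points and whose product is a single $c$-cycle, which is possible exactly when $c\equiv a+b-1\pmod 2$; the model here is the classical cycle-splitting identity, that two cycles sharing one common point, of lengths $k$ and $\ell$, multiply to a cycle of length $k+\ell-1$. When instead $c>a+b-1$, an $a$-cycle times a $b$-cycle is too short, so I would enlarge the factors: take $x$ to be several disjoint $a$-cycles and $y$ several disjoint $b$-cycles (still of orders exactly $a$, $b$), so that their indices become large enough to force the product onto one $c$-cycle, and, when $c$ is composite, lower the demand by letting $z$ have the ``spread-out'' cycle type $\prod_i p_i^{e_i}$ (with $c=\prod_i p_i^{e_i}$), possibly recursing on the number of prime divisors of $c$. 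In every case the two spare points $c+1,c+2$ are kept in reserve to repair a parity mismatch --- e.g.\ by inserting $(c+1\ c+2)$ into both $x$ and $y$ when $a,b$ are even, or by threading an entangled pair of longer cycles through $c+1,c+2$ otherwise. That there is always enough index budget is the numerical core: $a,b\ge 2$ forces $\tfrac1a+\tfrac1b\le 1<1+\tfrac1c$, which is precisely the Riemann--Hurwitz inequality ensuring that an order-$a$ and an order-$b$ permutation of sufficiently large index exist whose product is constrained to the chosen cycle type of $z$.

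\emph{Main obstacle.} The real work is not room but bookkeeping: one must pick the cycle types of $x$ and $y$ so that their orders are \emph{exactly} $a$ and $b$ (not merely divisors or multiples of them), so that the product has order \emph{exactly} $c$, and so that all cycles fit on $c+2$ points, and these requirements genuinely compete when the $\gcd$s and divisibilities among $a,b,c$ are awkward and when parities clash. I therefore expect the proof to consist of the generic construction above together with a bounded list of small exceptional triples settled by explicit choices or a short computation --- chief among them $c=2$, which forces $a=b=2$ and is solved by $x=(1\,2),\ y=(3\,4),\ z=(1\,2)(3\,4)$ in $S_4$; this is the single place where the sharp bound $c+2$ (as opposed to $c$ or $c+1$) is unavoidable. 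Once such a triple is in hand, the advertised application --- existence of a degree-$\le(c+2)$ cover of $\mathbb{P}^1(\cc)$ with three branch points whose ramification orders are prescribed by $a,b,c$ --- drops out of Riemann's existence theorem.
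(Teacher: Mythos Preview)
Your reduction and overall strategy match the paper's: reduce by symmetry to $a\le b\le c$, work in $S_{c+2}$, take $x$ to be a product of disjoint $a$-cycles and $y$ a product of disjoint $b$-cycles, and keep the two spare points for a parity-correcting transposition. So the framework is right.

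The genuine gap is the step you call ``force the product onto one $c$-cycle.'' You write that the inequality $\tfrac1a+\tfrac1b\le 1<1+\tfrac1c$ is ``precisely the Riemann--Hurwitz inequality ensuring'' such permutations exist, but Riemann--Hurwitz gives only a \emph{necessary} condition on the index sum; it does not produce a factorization. The paper closes this gap by invoking a nontrivial existence theorem of Edmonds--Kulkarni--Stong (equivalently Boccara): if $C_1,C_2$ are conjugacy classes in $S_n$ with $ind(C_1)+ind(C_2)\ge n-1$ and of the correct parity, then there exist $\alpha\in C_1$, $\beta\in C_2$ with $\alpha\beta$ an $n$-cycle (with a companion statement yielding an $(n{-}1)$-cycle). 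This is the engine that converts your ``index budget'' into an actual triple; without it, neither your regime $c\le a+b-1$ nor your regime $c>a+b-1$ is a proof, and your alternative of letting $z$ have cycle type $\prod p_i^{e_i}$ still requires an existence lemma for that target class.

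A second gap is the parity repair when $a,b$ are both odd and $c$ is even. Here no transposition $(c{+}1\ c{+}2)$ can be adjoined to $x$ or $y$ without destroying their orders, so your phrase ``threading an entangled pair of longer cycles through $c{+}1,c{+}2$'' is not yet a construction. In the paper this is the hardest case and is \emph{not} handled by a bounded list of exceptions: one first uses the EKS theorem (with a $(b{-}1)$-cycle in place of a $b$-cycle) to realize $(a,b,c')$ for a smaller even $c'$ with $z$ of shape $(c'.2)$ and with one of $x,y$ having a fixed point on the $c'$-cycle, and then glues on further $a$- and $b$-cycles through that fixed point, inducting on $c$. That induction hypothesis (the extra fixed point) is exactly the bookkeeping you anticipate, but it has to be set up and carried through, not deferred to a finite check.
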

The conjecture has previously been verified by computer calculation for $n\le 50$, cf.\ \cite{Kohl}. 

In general, for given subsets (e.g., conjugacy classes) $C_1,...,C_r$ of the symmetric group $S_n$, it is not 
at all an easy problem to decide whether there are permutations $\alpha_i\in C_i$ with $\alpha_1\cdots \alpha_r = 1$. 
At the same time, such questions are of major interest also outside of the purely combinatorial context (see Section \ref{covers}).
\subsection{The main theorem}
In this paper, we prove the following stronger version of Conjecture \ref{kohl}:
\begin{satz}
\label{main}
Let $1 < a\le b\le c$, then there exist elements $x,y,z$ of $S_{c+2}$ with orders $|x|=a$, $|y|=b$, $|z|=c$, fulfilling $xyz=1$. Additionally, $x$, $y$ and $z$ can be required to fulfill the following:
\begin{itemize}
 \item $z$ is a $c$-cycle or the disjoint product of a $c$-cycle and a transposition.
 \item $x$ only has cycles of length $a$ and (possibly) $1$, with the exception of at most one transposition.
 \item $y$ only has cycles of length $b$ and (possibly) $1$, with the exception of at most one transposition.
 \item The ``exceptional" transposition occurs in at most one of $x$, $y$ and $z$.
 \end{itemize}
\end{satz}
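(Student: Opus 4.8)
The plan is to construct the triple $(x,y,z)$ explicitly by placing $z$ as a $c$-cycle (or $c$-cycle times a transposition) on most of the $c+2$ points, and then realizing $x$ and $y$ as permutations whose product is $z^{-1}$. The natural framework is to think of $x \cdot y = z^{-1}$ as a factorization problem, equivalently a genus computation via Riemann--Hurwitz: if $x$ has cycle type with $f_x$ cycles (counting fixed points), $y$ has $f_y$ cycles, and $z^{-1}$ has $f_z$ cycles, then a transitive such triple on $n = c+2$ points exists with the right ``shape'' provided the index relation $(n - f_x) + (n - f_y) + (n - f_z) \equiv 0 \pmod 2$ holds and is large enough (this is the Conway--Parker / Riemann existence direction, but here I want an elementary explicit construction rather than an existence theorem). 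The key degrees of freedom are: how many $a$-cycles to put in $x$ versus how many fixed points; likewise for $y$; whether $z$ is a pure $c$-cycle or carries the extra transposition; and whether $x$ or $y$ carries the ``exceptional'' transposition. The parity constraint and the bound $n = c+2$ are exactly what force the various cases.

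**The main steps** I would carry out are as follows. First, reduce to the generic case: write $c+2 = qa + r$ and $c+2 = q'b + r'$ with $0 \le r < a$, $0 \le r' < b$, and try $x = $ product of $\lfloor (c+2)/a \rfloor$ disjoint $a$-cycles plus fixed points, similarly $y$, and $z$ a single $c$-cycle. Then one checks whether a factorization $z^{-1} = xy$ with these cycle types exists; the obstruction is purely the parity of the total index. Second, when the parity is wrong, absorb the defect of $1$ by replacing one $a$-cycle by $(a/\text{shorter})$... no --- rather by introducing exactly one transposition into whichever of $x$, $y$, $z$ fixes the parity while respecting the ``at most one transposition, in at most one of them'' rule; e.g. replace a fixed point pair in $x$ by a transposition, or pass from $z$ a $c$-cycle to $z$ a $c$-cycle times a transposition (which needs $n \ge c+2$, the reason the bound is $c+2$ and not $c$). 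Third, verify the orders: $|x| = a$ as long as $x$ genuinely contains an $a$-cycle (so one must separately handle small $a$, e.g. $a = 2$, and the degenerate situations where $\lfloor (c+2)/a\rfloor = 1$ forces careful bookkeeping of the leftover points), and the exceptional transposition never disturbs the order since $a,b,c \ge 2$ and $\mathrm{lcm}(a,2) $ issues only arise if $a$ is odd --- which is fine because then an $a$-cycle times a transposition has order $2a \ne a$, so the exceptional transposition must be routed into a permutation whose order is already even, or into $z$. Fourth, assemble the explicit permutations in each case and exhibit the product identity by a direct (small) computation, using the standard fact that an $n$-cycle can be written as a product of an $(n-k+1)$-cycle and a $k$-cycle sharing one point, iterated.

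**The hard part** will be the case analysis forced by the tightness of the degree bound $c+2$ together with the order (not just cycle-type) constraints. Unlike a pure Hurwitz-existence statement, here we cannot spend extra points freely: we have exactly two points of slack beyond $c$, and the parity defect must be killed using those two points (as a transposition on them) or by lengthening $z$, and simultaneously the ``exceptional transposition in at most one of $x,y,z$'' clause must be honored and the orders must come out to exactly $a,b,c$ rather than some multiple. I expect the bulk of the proof to be a finite table of configurations indexed by the residues of $c+2$ mod $a$ and mod $b$ and by small-value exceptions ($a=2$, $a=b$, $b=c$, $a \mid c$, etc.), with a uniform ``generic'' construction covering all but finitely many boundary cases, each of the latter dispatched by an explicit triple. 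The conceptual content --- that a parity count is the only real obstruction --- is clear; making it fit inside $S_{c+2}$ with prescribed orders is the technical heart.
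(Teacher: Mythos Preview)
Your overall picture is right: the parity of the total index is the governing obstruction, and the extra two points beyond $c$ are exactly what accommodate a single ``parity-fixing'' transposition. But there is a real gap in your plan at the step where you assert that, once parity is correct, ``one checks whether a factorization $z^{-1}=xy$ with these cycle types exists; the obstruction is purely the parity of the total index.'' That is not a triviality and it is not something you can get from the iterated one-point-overlap decomposition of a cycle: that trick writes an $n$-cycle as a product of \emph{arbitrary} shorter cycles, not as (product of $a$-cycles) times (product of $b$-cycles). The paper does not attempt an explicit generic construction here; it invokes the Edmonds--Kulkarni--Stong theorem (given an index-sum of the right parity and at least $n-1$, two prescribed conjugacy classes contain elements whose product is an $n$-cycle, and similarly for an $(n-1)$-cycle), together with a short lemma verifying the required lower bound on $\mathrm{ind}(A)+\mathrm{ind}(B)$. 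You explicitly say you want ``an elementary explicit construction rather than an existence theorem,'' but you give no mechanism for producing the factorization, and without either EKS or a genuine replacement the argument does not go through.

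There is a second, more localized gap. Your routing rule for the exceptional transposition (``put it into whichever permutation already has even order, or into $z$'') breaks down precisely when $a$ and $b$ are both odd and $c$ is even: then neither $x$ nor $y$ can absorb a transposition without changing its order, and attaching the transposition to $z$ forces $z$ to have cycle structure $(c.2)$ in $S_{c+2}$, so the target for the product $xy$ is no longer a single long cycle and EKS does not apply directly. In the paper this is the hardest lemma: it is handled by an induction on $c$ in steps of $a-1$ or $a+b-2$, extending a solution for a smaller $c'$ by multiplying on the outside by an extra $a$-cycle and/or $b$-cycle that share exactly one point with the big cycle of $z$. This step also requires tracking, through the induction, that one of $x,y$ retains a fixed point on the support of the $c$-cycle of $z$; that bookkeeping is what makes the induction close. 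None of this is visible in your plan, and the ``finite table of configurations'' you anticipate would not cover it, since the case in question is an infinite family.
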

First note that the requirement $a\le b\le c$ is of course not a real restriction in comparison with the original conjecture, since the product-one condition for $x$, $y$ and $z:=(xy)^{-1}$ is invariant under cyclic 
permutations of $x,y,z$; and if $(x,y,z)$ has product one, then so has $(y,x,z^x)$ - with $z$ and $z^x$ of the same cycle type.\\ 
Also, the permutation degree $c+2$ is in general best possible, as the tuple $(2^k-1,2^k-1,2^k)$ (with $k\ge 2$) shows.\\%
The extra transposition in the statement of the theorem can not be avoided in general; it is needed as a parity-check bit for the sign of $xyz$.
\subsection{Auxiliary results}
The proof of Theorem \ref{main} will make frequent use of the {\it index of a permutation}. We therefore recall its definition:
\begin{defi}
For $\alpha\in S_n$, the index $ind(\alpha)$ is defined as $n$ minus the number of disjoint cycles of $\alpha$.\\
If $C$ is the conjugacy class of $S_n$ containing $\alpha$, define $ind(C):=ind(\alpha)$.
\end{defi}
Note that, equivalently, the index of $\alpha$ is the smallest number $k\in \nn$ such that $\alpha$ can be written as a product of $k$ transpositions.
In particular, an $m$-cycle has index $m-1$.

The following lemma is completely elementary. We state it as we will use it later in the proof of Theorem \ref{main} without further commentary.
\begin{lemma}
\label{estim}
Let $2\le k\le n\in \nn$ and let $\alpha\in S_n$ consist of $\lfloor n/k\rfloor$ $k$-cycles in disjoint cycle notation. Then it holds that
\begin{itemize}
 \item[a)] $$ind(\alpha) \ge \frac{n-1}{2},$$
with equality if and only if $n$ is odd and either $k=2$ or $k=\frac{n+1}{2}$.
\item[b)] If $n$ is even, then $$ind(\alpha) \ge \frac{n}{2},$$ with equality if and only if $k\in \{2,\frac{n}{2}+1\}$ or $(n,k)=(8,3)$.
\end{itemize}
\end{lemma}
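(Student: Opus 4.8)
The plan is to reduce the whole statement to the elementary fact that $ind(\alpha)=\lfloor n/k\rfloor (k-1)$. Writing $n=qk+r$ with $q=\lfloor n/k\rfloor\ge 1$ and $0\le r\le k-1$, the permutation $\alpha$ consists of $q$ cycles of length $k$ together with $r$ fixed points, so it has $q+r$ disjoint cycles and
$$ind(\alpha)=n-(q+r)=qk+r-q-r=q(k-1).$$
Thus everything becomes a statement about the function $q\mapsto q(k-1)$ under the single constraint $qk\le n\le (q+1)k-1$ coming from $0\le r\le k-1$.

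For part a) I would chain two inequalities. From $n\le (q+1)k-1$ one gets
$$\frac{n-1}{2}\le \frac{(q+1)k-2}{2}=\frac{qk+k-2}{2},$$
and the inequality $\frac{qk+k-2}{2}\le q(k-1)$ is, after clearing denominators, equivalent to $(q-1)(k-2)\ge 0$, which holds trivially since $q\ge 1$ and $k\ge 2$. Combining the two yields $ind(\alpha)=q(k-1)\ge \frac{n-1}{2}$. For the equality part one simply records when both inequalities are tight: the first forces $r=k-1$, i.e.\ $n\equiv -1\pmod k$, and the second forces $q=1$ or $k=2$. Coupling $r=k-1$ with $q=1$ gives $n=2k-1$, i.e.\ $n$ odd and $k=(n+1)/2$; coupling $r=k-1$ with $k=2$ gives $n=2q+1$, i.e.\ $n$ odd and $k=2$. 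That is exactly the claimed list.

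For part b), when $n$ is even the bound of a) reads $ind(\alpha)\ge (n-1)/2$, and since $ind(\alpha)$ is an integer this upgrades at once to $ind(\alpha)\ge n/2$. For the equality characterization I would impose $q(k-1)=n/2$ directly. Substituting $n=2q(k-1)$ into $n\le (q+1)k-1$ and simplifying turns the constraint into $(q-1)(k-2)\le 1$; since $q\ge 1$ and $k\ge 2$ this leaves only $q=1$ (which gives $k=n/2+1$), $k=2$ (which gives $q=n/2$, valid for every even $n$), and $(q,k)=(2,3)$ (which gives $n=8$). A one-line check that $\lfloor n/k\rfloor$ indeed equals $q$ in each of these three cases then completes the proof.

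The only mildly delicate point is ensuring the equality analysis is both complete and free of spurious cases — in particular that the isolated exception $(n,k)=(8,3)$ in part b) is picked up and nothing else slips in. This is handled cleanly by the bookkeeping inequality $(q-1)(k-2)\le 1$, which is really the heart of the matter; everything else is routine arithmetic.
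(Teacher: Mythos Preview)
Your proof is correct, and the bookkeeping via $(q-1)(k-2)$ is clean. It is, however, organized rather differently from the paper's argument. The paper also starts from $ind(\alpha)=\lfloor n/k\rfloor(k-1)$ and the same integer bound $\lfloor n/k\rfloor\ge (n+1-k)/k$, but then rewrites the resulting lower bound as $n+2-(k+\frac{n+1}{k})$ and analyzes the convex function $k\mapsto k+\frac{n+1}{k}$, which attains its maximum over $2\le k\le \frac{n+1}{2}$ at the endpoints. For part b) the paper proceeds by further case distinctions on the range of $k$ (excluding $k\in\{2,\frac{n}{2}+1\}$, then splitting $k\le n/3$ versus $n/3<k\le n/2$) and arrives at the exceptional pair $(8,3)$ only after a small residual check. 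Your route replaces this analytic picture by a single factorization: the inequality is exactly $(q-1)(k-2)\ge 0$, and the equality cases in b) fall out of $(q-1)(k-2)\le 1$ with no range-splitting. This buys you a tidier equality analysis, especially in b); what the paper's formulation buys is the conceptual picture that the extremal $k$'s are endpoints of an interval, which is perhaps more suggestive if one later wants asymptotics rather than exact equality. One small remark: your final ``one-line check that $\lfloor n/k\rfloor=q$'' is in fact already contained in the constraint $n\le (q+1)k-1$ you imposed, so it is redundant (though harmless).
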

\begin{proof}
For a), we can assume without loss that $k\le \frac{n+1}{2}$, as otherwise $\alpha$ is a single $k$-cycle, with index $k-1>\frac{n-1}{2}$.
It holds that $$ind(\alpha)=\lfloor n/k\rfloor \cdot (k-1) \ge \frac{n+1-k}{k}\cdot (k-1) = n+2 -(k+\frac{n+1}{k}).$$ 
The last bracket is $\le 2+\frac{n+1}{2}$, with equality at the extreme cases $k=2$ and $k=\frac{n+1}{2}$. This shows a).

For b), if $k\notin \{2,\frac{n}{2}+1\}$, we can even assume $3\le k\le \frac{n}{3}$, since $\frac{n}{3}<k\le \frac{n}{2}$ would yield $ind(\alpha)>2\cdot (\frac{n}{3}-1)$, which is $>\frac{n}{2}$ for $n>4$. But then, we have
$$ind(\alpha)\ge  n+2 -(k+\frac{n+1}{k}) \ge n+2 -(3+\frac{n+1}{3})=\frac{2}{3}(n+1)-2.$$ This is $\le \frac{n}{2}$ only for $n\le 8$, and one checks directly that $(n,k)=(8,3)$ is the only additional case that 
reaches equality.
\end{proof}

The main ingredients for the proof of Theorem \ref{main} are the strong existence results in Theorem \ref{eks} below; they were obtained by Edmonds, Kulkarni and Stong in \cite[Cor.\ 4.4.\ and Lemma 4.5]{EKS}.
As noted there, they also follow from \cite[Thm.\ 4.3]{Boc}. 
%
\begin{satz}[Edmonds, Kulkarni, Stong]
\label{eks}
Let $C_1, C_2$ be conjugacy classes of $S_n$.
\begin{itemize}
 \item[a)] 
Assume that $ind(C_1)+ind(C_2)$ is of the form $n-1 + 2k$, with $k\in \nn_0$.\\
Then there exist $\alpha\in C_1$ and $\beta \in C_2$ such that $\alpha\beta$ is an $n$-cycle.
\item[b)]
Assume that $ind(C_1)+ind(C_2)$ is of the form $n + 2k$, with $k\in \nn_0$.\\
If in addition, $C_1$ and $C_2$ are not both the class of fixed point free involutions, then 
there exist $\alpha\in C_1$ and $\beta \in C_2$ such that $\alpha\beta$ is an $(n-1)$-cycle and the subgroup $\langle\alpha, \beta\rangle$ acts transitively.
\end{itemize}
\end{satz}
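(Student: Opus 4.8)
The first thing I would record is that both hypotheses are forced. Since the sign is a homomorphism, $(-1)^{ind(\alpha)+ind(\beta)}=(-1)^{ind(\alpha\beta)}$, so $ind(C_1)+ind(C_2)\equiv ind(\alpha\beta)\pmod 2$; and concatenating minimal transposition factorizations of $\alpha$ and $\beta$ gives $ind(\alpha\beta)\le ind(C_1)+ind(C_2)$. An $n$-cycle has index $n-1$ and an $(n-1)$-cycle index $n-2$, so case a) forces $ind(C_1)+ind(C_2)$ to be $\ge n-1$ of the right parity and case b) forces $\ge n-2$ of the right parity. Writing $\gamma:=(\alpha\beta)^{-1}$, both read as the single statement
$$ind(C_1)+ind(C_2)+ind(\gamma)=2(n-1+k),$$
i.e.\ that the associated genus-$k$ datum exists with $k\ge 0$; the theorem asserts that this arithmetic is also sufficient. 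I would prove both parts by induction on $n$, the workhorse being the elementary fact that right-multiplying a permutation by a transposition $(s\,t)$ merges the two cycles through $s$ and $t$ when they differ and splits one cycle into two otherwise, in either case changing the index by exactly $\pm1$.

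\textbf{Part a), the easy reductions.}
The base cases ($n\le 2$, and the degenerate case $C_2=\{1\}$, where $\alpha$ itself must be the $n$-cycle) are immediate. For the inductive step, suppose one class, say $C_1$, has a fixed point while $C_2$ has a part $\ge 2$. Then I delete that fixed point and simultaneously shorten one part of $C_2$ by $1$: by induction realize in $S_{n-1}$ an $(n-1)$-cycle $\gamma^{\circ}=\alpha^{\circ}\beta^{\circ}$ with the reduced classes (again case a), with the same $k$), and then reinsert the fixed point of $\alpha$ while lengthening the chosen cycle of $\beta^{\circ}$ via the transposition move, so that the product grows into a full $n$-cycle and $\beta$ regains its prescribed type. (One checks the grown cycle can be steered to any chosen cycle of $\beta^{\circ}$ by choosing where to base the cycle notation of $\gamma^{\circ}$.) This handles every situation except the one in which $C_1$ and $C_2$ are \emph{both} fixed-point-free.

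\textbf{Part a), the crux.}
The remaining case, both classes fixed-point-free, is where I expect all the difficulty to sit; note that here they cannot both be involutions, since that would violate the parity hypothesis of a). Now no point can simply be deleted, and the reduction must cut into genuine cycles. Picking points inside a shortest cycle of each class, I would apply the merge/split transposition move to peel off a controlled piece and descend to smaller $n$, keeping the product a full cycle and landing in classes that still satisfy the index-sum hypothesis. The entire weight of the theorem lies here: one must guarantee that, after surgery, the two cycle types are still \emph{exactly} realizable (not merely of the correct index sum) and that the full-cycle/transitivity condition survives. This is precisely the case carried out in \cite{EKS}, and in more general form in \cite{Boc}; the careful bookkeeping of which cycles merge or split, together with elementary index estimates ruling out any parity or size obstruction, is the technical heart.

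\textbf{Part b), reduction to a) and the exception.}
Assume a) is proved in $S_{n-1}$. In case b) neither class is trivial (else $ind(C_1)+ind(C_2)\le n-1<n$), so each has a cycle of length $\ge 2$. The target $\gamma=\alpha\beta$ fixes exactly one point $p$, and transitivity forces both $\alpha$ and $\beta$ to move $p$; from $\gamma(p)=p$ we get $\beta(p)=\alpha^{-1}(p)=:v$. Deleting $p$ (bypassing it in the $\alpha$-cycle and the $\beta$-cycle through it) lowers one part of each class by $1$, drops the index sum by $2$, and leaves $\gamma$ a genuine $(n-1)$-cycle on the remaining points — exactly an instance of a) in $S_{n-1}$ with the same $k$. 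Conversely, from a realization $\alpha^{\circ}\beta^{\circ}=\gamma^{\circ}$ of a), I insert a new point $p$ just after $v$ in a cycle of $\alpha^{\circ}$ and just before $v$ in the cycle of $\beta^{\circ}$ through $v$; this keeps the product equal to $\gamma^{\circ}$ (now fixing $p$), restores the prescribed types, and makes $\langle\alpha,\beta\rangle$ transitive because $p$ now sits in a nontrivial $\alpha$-cycle attached to the $(n-1)$-cycle. The sole obstruction is that restoring the types may force both grown cycles to be the single transposition $(p\,v)$; tracing the constraint $\beta(p)=\alpha^{-1}(p)=v$ shows this is forced exactly when $C_1=C_2$ is the class of fixed-point-free involutions, in which case $\{p,v\}$ is a block fixed by both generators and transitivity is genuinely impossible. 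This is the stated exception, and the argument shows it is the only one.
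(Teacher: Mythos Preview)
The paper does not prove this theorem; it is imported as a black box from \cite[Cor.~4.4 and Lemma~4.5]{EKS} (alternatively \cite[Thm.~4.3]{Boc}). There is therefore no in-paper argument to compare against, and your proposal must stand on its own.

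As a proof it is incomplete in two places. For a), you explicitly hand the only substantive case --- both classes fixed-point-free --- back to \cite{EKS} and \cite{Boc}; the fixed-point reduction that precedes it is routine, so in effect your treatment of a) is the same citation the paper makes, dressed up with a warm-up.

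For b), the reduction to a) has a genuine gap. After shortening one part of each $C_i$ and invoking a) to produce $\alpha^\circ\in C_1'$, $\beta^\circ\in C_2'$ with $\alpha^\circ\beta^\circ$ an $(n-1)$-cycle, your reinsertion requires a single point $v$ lying \emph{simultaneously} in an $\alpha^\circ$-cycle of the specific shortened length and in a $\beta^\circ$-cycle of the specific shortened length. Part a) only asserts existence of some realization; it gives no control over which $\alpha^\circ$-cycles meet which $\beta^\circ$-cycles, and when the shortened lengths occur uniquely (for instance a lone fixed point created by shortening a $2$-cycle) the needed $v$ may simply fail to exist for the realization you are handed. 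Your final paragraph also conflates this type-restoration issue with a transitivity obstruction: in your construction transitivity is automatic, since $\alpha(v)=p$ links $p$ to the support of the $(n-1)$-cycle $\gamma^\circ$. The true reason the fixed-point-free-involution class is excluded is not a block argument but the direct observation that then $\alpha(p)=\beta(p)=:v$ forces $\alpha\beta(v)=v$ as well, so $\alpha\beta$ has at least two fixed points and cannot be an $(n-1)$-cycle at all. The argument for b) in \cite{EKS} is not a one-line reduction to a); closing this gap would require either a strengthened version of a) with positional control, or a separate combinatorial argument.
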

Note that the conditions on the sum of indices in Theorem \ref{eks} are also necessary, as can be easily deduced from the Riemann-Hurwitz formula (stated below in Theorem \ref{riem-hurw}).
\subsection{Proof of Theorem \ref{main}}
\label{proof}
Now we proceed to the proof of Theorem \ref{main}. 
Throughout, we will assume $2\le a\le b\le c\in \nn$, and we will use the following terminology: 
Let $A$ be the class of elements of $S_{c}$ with $\lfloor c/a \rfloor$ $a$-cycles and fixed points otherwise; similarly define the class $B$ via $b$ instead of $a$.
Let $C$ be the class of $c$-cycles.

We will opt to prove Theorem \ref{main} by using the classes $A$, $B$ and $C$ or by slightly adapting them.
We will break the proof into several cases, depending on the exact permutation degree that we work with (and also on the way Theorem \ref{eks} will be used). We will call the class triple $(A,B,C)$ even, if $ind(A)+ind(B)+ind(C)$ is even,
and odd otherwise. In particular, an odd class triple cannot contain elements with product $1$, so in this case, we need to modify one of the classes by removing a cycle or adding an extra transposition.
%
%
\begin{lemma}
Assume that either $(A,B,C)$ is even or that $a$, $b$ and $c$ are all even and not all the same.
Then the statement of Theorem \ref{main} holds even with elements $x,y,z$ all in $S_c$.  
\end{lemma}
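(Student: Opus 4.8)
The plan is to realize the conclusion of Theorem \ref{main} using the classes $A$, $B$, $C$ in $S_c$ (or mild modifications), applying Theorem \ref{eks}(a) to get $xy$ equal to a $c$-cycle, hence $z:=(xy)^{-1}$ a $c$-cycle of order $c$ as required. The key quantity to control is the index sum $\mathrm{ind}(A)+\mathrm{ind}(B)+\mathrm{ind}(C)$. Since $\mathrm{ind}(C)=c-1$ (a $c$-cycle), the hypothesis of Theorem \ref{eks}(a) applied to $C_1=A$, $C_2=B$ with target an $n$-cycle ($n=c$) reads: $\mathrm{ind}(A)+\mathrm{ind}(B)\equiv c-1\pmod 2$ and $\mathrm{ind}(A)+\mathrm{ind}(B)\ge c-1$. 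The parity condition is exactly the assumption that $(A,B,C)$ is even. So in the even case, the whole argument reduces to verifying the inequality $\mathrm{ind}(A)+\mathrm{ind}(B)\ge c-1$, and this is precisely what Lemma \ref{estim} is designed to deliver: applied to $A$ (with parameters $k=a$, $n=c$) it gives $\mathrm{ind}(A)\ge \frac{c-1}{2}$, and likewise $\mathrm{ind}(B)\ge\frac{c-1}{2}$, so the sum is $\ge c-1$. Hence Theorem \ref{eks}(a) applies directly and produces $x\in A$, $y\in B$ with $xy$ a $c$-cycle; setting $z=(xy)^{-1}$ finishes the even case, and all four bulleted structural conditions hold automatically (no exceptional transposition is used, $z$ is a $c$-cycle, $x$ has only $a$-cycles and fixed points, $y$ has only $b$-cycles and fixed points).

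**The second case**

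The remaining case is when $(A,B,C)$ is odd but $a,b,c$ are all even and not all equal. Here I would modify one of the classes to fix the parity. Since $a,b,c$ are even, each of $A$, $B$, $C$ built from even-length cycles has index with controlled parity; the natural move is to replace $C$ (the class of $c$-cycles) by the class $C'$ of a $c$-cycle times a disjoint transposition — but that needs degree $c+2$, so instead, to stay inside $S_c$, I would absorb the parity correction into $A$ or $B$. Concretely: because $a,b,c$ are not all equal, at least one of $a,b$ is strictly less than $c$ (indeed $a\le b\le c$ with not all equal forces $a<c$), so $A$ has at least one fixed point to play with; replace one $a$-cycle-plus-some-fixed-points configuration by one where an extra transposition is inserted among the fixed points, changing $\mathrm{ind}(A)$ by $1$ and flipping the parity of the triple. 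One must check there is enough room — i.e. that $c-\lfloor c/a\rfloor\cdot a\ge 2$ or that dropping down to $\lfloor (c-2)/a\rfloor$ $a$-cycles still leaves index sum $\ge c-1$ — and that after modification the index sum inequality $\ge c-1$ still holds, again via (the proof technique of) Lemma \ref{estim}. Then Theorem \ref{eks}(a) applies to the modified pair, and the structural bullets are still satisfied: the exceptional transposition now sits in $x$ (and in only one of $x,y,z$).

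**The main obstacle**

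The main obstacle is the bookkeeping in the odd subcase: ensuring that after inserting the parity-correcting transposition the index sum is still at least $c-1$, and simultaneously that the modified permutation still has the prescribed order $a$ (adding a disjoint transposition to an element built from $a$-cycles keeps the order $a$ precisely because $a$ is even, so $\mathrm{lcm}(a,2)=a$ — this is exactly where the hypothesis ``$a$ even'' is used). I expect a handful of small-parameter cases (small $a$, or $c$ only slightly larger than $a$, or the near-equality cases flagged in Lemma \ref{estim}) to need individual attention, but each reduces to an explicit construction of $x,y$ with $xy$ a $c$-cycle. The borderline cases of Lemma \ref{estim} — where $\mathrm{ind}(\alpha)=\frac{n-1}{2}$ exactly, forcing $\mathrm{ind}(A)=\mathrm{ind}(B)=\frac{c-1}{2}$ and leaving no slack — are the ones to watch, since there the parity correction cannot be afforded without either going to $S_{c+1}$ or removing a cycle; but these are exactly the configurations ($a,b\in\{2,\frac{c+1}{2}\}$ with $c$ odd) that are excluded here because we are assuming $a,b,c$ all even, so $c$ odd does not arise in the problematic subcase. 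With those checks in place the lemma follows.
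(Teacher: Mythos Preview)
Your treatment of the even case is correct and identical to the paper's: Lemma~\ref{estim}(a) gives $\mathrm{ind}(A)+\mathrm{ind}(B)\ge c-1$, and Theorem~\ref{eks}(a) applies directly.

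In the odd subcase with $a,b,c$ all even and not all equal, your approach is on the right track but has a gap when $a\mid c$. Your claim that $a<c$ forces $A$ to have a fixed point is false exactly in this situation: if $a$ properly divides $c$ then $A$ has \emph{no} fixed points. Your fallback of ``dropping down to $\lfloor (c-2)/a\rfloor$ $a$-cycles'' then needs care: if you still intend to insert the extra transposition afterwards, the index of $A$ changes by $-(a-1)+1=2-a$, which is \emph{even}, so the parity of the triple is not corrected and Theorem~\ref{eks}(a) still does not apply.

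The paper resolves this cleanly by splitting on whether $a\mid c$. If $a\nmid c$, then (since $a,c$ are both even) $c\bmod a\ge 2$, so $A$ has at least two fixed points and your transposition insertion works. If $a\mid c$ properly, the paper simply removes one $a$-cycle and adds \emph{nothing}: because $a$ is even, this changes $\mathrm{ind}(A)$ by the odd number $a-1$, which flips the parity. One then checks
\[
\mathrm{ind}(A')=\Bigl(\tfrac{c}{a}-1\Bigr)(a-1)=c+1-\Bigl(a+\tfrac{c}{a}\Bigr)\ge \tfrac{c}{2}-1,
\]
using $a+c/a\le 2+c/2$ for $2\le a\le c/2$, and combines this with $\mathrm{ind}(B)\ge c/2$ from the even-$n$ part~(b) of Lemma~\ref{estim}. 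This gives $\mathrm{ind}(A')+\mathrm{ind}(B)\ge c-1$ with the correct parity, and Theorem~\ref{eks}(a) applies. No exceptional small cases are needed, contrary to what you anticipated.
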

\begin{proof}
By Lemma \ref{estim}a), $$ind(A)+ind(B) \ge c-1,$$
and since $ind(C)=c-1$, we only need $(A,B,C)$ to be even in order to be able to apply Theorem \ref{eks}a). This shows the first case.

So let $(A,B,C)$ be odd and $a$, $b$, $c$ all be even, but not all the same; in particular, $a\le c-2$.\\
If $a$ does not divide $c$, then the elements of class $A$ have at least two fixed points, and we can therefore replace $A$ by the class $A'$ in $S_c$ containing one extra transposition compared to $A$.
The elements of $A'$ still have order $a$, and we can apply Theorem \ref{eks}a) with the class triple $(A',B,C)$.\\
If, on the other hand, $a$ properly divides $c$, let $A'$ be the class in $S_c$ with exactly one $a$-cycle less than $A$. As $a$ is even, the triple $(A',B,C)$ is again even. Furthermore, 
$$ind(A')=\underbrace{\frac{c-a}{a}}_{\text{ number of } a-\text{cycles in} A'} \cdot (a-1) = c+1-\underbrace{(a+c/a)}_{\le 2+c/2} \ge c/2-1,$$
which, together with $ind(B)\ge c/2$ (Lemma \ref{estim}b)) again shows that we can apply Theorem \ref{eks}a) with $(A',B,C)$.
\end{proof}

\begin{lemma}
Assume that $(A,B,C)$ is odd, at least one of $a,b$ is even, and $a,b,c$ are not all even. 
Then the statement of Theorem \ref{main} holds with elements $x,y,z$ all in $S_{c+1}$.
\end{lemma}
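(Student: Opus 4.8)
We are in the case where $(A,B,C)$ is odd, at least one of $a,b$ is even, and not all of $a,b,c$ are even. The natural move is to work in $S_{c+1}$ and apply Theorem \ref{eks}b): we want $z$ to be a $c$-cycle (an $(n-1)$-cycle with $n=c+1$) generated together with its two partners by a transitive subgroup of $S_{c+1}$. For this we need to produce classes $\tilde A, \tilde B$ in $S_{c+1}$, with representatives of orders $a$ and $b$ respectively, satisfying $ind(\tilde A)+ind(\tilde B)\equiv (c+1)\pmod 2$ and $ind(\tilde A)+ind(\tilde B)\ge c+1$, and avoiding the forbidden configuration in which both classes are fixed-point-free involutions.

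The first step is to set up $\tilde A$ and $\tilde B$ as the images of $A$ and $B$ under the natural inclusion $S_c\hookrightarrow S_{c+1}$ (adding one fixed point), so that $ind(\tilde A)=ind(A)$ and $ind(\tilde B)=ind(B)$. Since $(A,B,C)$ was odd and $ind(C)=c-1$, we have $ind(A)+ind(B)\equiv c\pmod 2$, hence $ind(\tilde A)+ind(\tilde B)\equiv c+1\pmod 2$ — the parity is now correct for Theorem \ref{eks}b). The index-sum bound $ind(A)+ind(B)\ge c-1$ from Lemma \ref{estim}a) is not quite enough; I expect the bound $ind(A)+ind(B)\ge c$ to hold in essentially all the relevant subcases, using Lemma \ref{estim}b) applied to whichever of $a,b$ is even (so its associated class has index $\ge c/2$) together with Lemma \ref{estim}a) for the other. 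The delicate subcase is when $a$ or $b$ equals $2$ (giving a small index) or when one of them equals $\tfrac{c}{2}+1$ or hits the $(8,3)$ exception; these need to be checked by hand, and if the bound $ind(\tilde A)+ind(\tilde B)\ge c+1$ fails one must instead enlarge one class by inserting an extra transposition (using a spare fixed point, which exists because not all of $a,b,c$ are even forces $a$ or $b$ to be small enough relative to $c$, or at least not to divide $c$), thereby raising its index by $1$ and flipping parity back — but then one re-inserts a $1$-cycle removal elsewhere, or re-examines parity. Keeping the bookkeeping of parity versus size consistent is the main obstacle.

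The second step is to rule out the excluded case of Theorem \ref{eks}b), namely that $\tilde A$ and $\tilde B$ are both classes of fixed-point-free involutions in $S_{c+1}$. Since $c+1$ would then have to be even, i.e.\ $c$ odd, and $a=b=2$; but $c$ odd together with $a=b=2$ makes $ind(A)+ind(B)=2\lfloor c/2\rfloor=c-1$ odd $\iff c$ even, contradiction, so in fact $(A,B,C)$ with $a=b=2$, $c$ odd is already even and handled by the previous lemma — alternatively, in $S_{c+1}$ a fixed-point-free involution needs $c+1$ points moved, which is more than $2\lfloor c/2\rfloor$ when $c$ is odd, so the inclusion $\tilde A$ is never fixed-point-free and the exclusion never triggers. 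Either way this case is vacuous.

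The final step is to assemble: Theorem \ref{eks}b) yields $x\in\tilde A$, $y\in\tilde B$ with $xy$ a $c$-cycle and $\langle x,y\rangle$ transitive on the $c+1$ points; set $z:=(xy)^{-1}$, a $c$-cycle of order $c$ (with exactly one fixed point among the $c+1$), while $x$ has order $a$ and cycle type consisting of $a$-cycles, fixed points, and at most one transposition, and similarly $y$. This matches all four bullet conditions of Theorem \ref{main} with permutation degree $c+1$, since the ``exceptional'' transposition was introduced in at most one of the classes. I anticipate the only real work is the case analysis in Step 1 verifying the index bound and choosing correctly where to add the transposition; everything else is formal.
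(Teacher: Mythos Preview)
Your overall strategy --- pass to $S_{c+1}$ and invoke Theorem \ref{eks}b) to manufacture a $c$-cycle --- is exactly the paper's. But your parity bookkeeping contains a genuine error that derails the plan.

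You correctly note that the inclusion $S_c\hookrightarrow S_{c+1}$ leaves indices unchanged, writing $ind(\tilde A)=ind(A)$ and $ind(\tilde B)=ind(B)$. You then assert ``$ind(A)+ind(B)\equiv c\pmod 2$, hence $ind(\tilde A)+ind(\tilde B)\equiv c+1\pmod 2$'', which is a flat contradiction: the two sums are equal, so both are $\equiv c\pmod 2$. That is the \emph{wrong} parity for Theorem \ref{eks}b), which in $S_{c+1}$ demands the index sum to be $\equiv c+1\pmod 2$. Consequently your later remark that adding a transposition would ``flip parity back'' is exactly backwards: the extra transposition is not an emergency measure for a failing index bound, it is \emph{always} required here to repair the parity.

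The paper's argument is also much shorter than the case analysis you anticipate. From Lemma \ref{estim}a) one has $ind(A)+ind(B)\ge c-1$; but equality would give $ind(A)+ind(B)+ind(C)=2(c-1)$, contradicting the oddness hypothesis. Hence $ind(A)+ind(B)\ge c$ automatically. Now take the even one among $a,b$ (the paper relabels so that $a$ is even, dropping the order assumption), deduce that the other two of $a,b,c$ are odd, and observe that in $S_{c+1}$ the class $A$ then has at least two fixed points; replace $A$ by $A'$ with one additional transposition. This gives $ind(A')+ind(B)\ge c+1$ with the correct parity, and the fixed-point-free-involution exclusion in Theorem \ref{eks}b) is vacuous since $b$ is odd. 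No appeal to Lemma \ref{estim}b) is needed.

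So the missing idea is simply: the oddness of $(A,B,C)$ simultaneously forces the transposition (for parity) and guarantees the strict inequality $ind(A)+ind(B)>c-1$ (for the size condition). Once you see this, Step 1 collapses to a couple of lines.
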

\begin{proof}
Assume for the sake of convenience that $a$ is even. This can be done without loss, as we will not use the assumption $a\le b$ here. 
Since $(A,B,C)$ can only be odd if exactly one or three out of $a,b,c$ are even, our assumptions already force $b$ and $c$ to be odd. Now replace $A,B$ and $C$ by the classes with the same number of non-trivial cycles in $S_{c+1}$.
In particular, the cycle structure of $C$ is $(c.1)$, and elements of $A$ have at least two fixed points on $\{1,...,c+1\}$ (since $a$ is even and $c$ is odd).
Since $(A,B,C)$ is odd, the strict inequality $ind(A)+ind(B)> c-1$ must hold (or otherwise $ind(A)+ind(B)+ind(C)=2(c-1)$),
so after replacing $A$ with the class $A'$ with one extra transposition, we have $ind(A')+ind(B)\ge c+1$ and $(A',B,C)$ is even. We can therefore apply Theorem \ref{eks}b) in $S_{c+1}$.
\end{proof}

Now the only cases that are left to treat are the case $a=b=c$ even, and the case that $(A,B,C)$ is odd and both $a$ and $b$ are odd. In the last case, obviously $c$ must be even.
We treat these cases in the following lemma, which therefore finishes the proof of Theorem \ref{main}. Its proof is slightly more involved than the previous ones, mainly since we now cannot apply Theorem \ref{eks} directly.
\begin{lemma}
\label{onlyceven}
Let $c$ be even, and either $a$ and $b$ odd or $a=b=c$.
Then the statement of Theorem \ref{main} holds with elements $x,y,z  \in S_{c+2}$ where $z$ is 
of cycle structure $(c.2)$. Furthermore, at least one of $x$ and $y$ has a fixed point on the support of the $c$-cycle of $z$.
\end{lemma}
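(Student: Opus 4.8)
The plan is to split into the two named cases and, in each, engineer a class triple in $S_{c+2}$ whose index sum is $(c+1)-1+2k = c+2k$ so that Theorem \ref{eks}b) produces a $(c+1)$-cycle, and then multiply that $(c+1)$-cycle by a transposition to turn it into the desired $(c.2)$-element $z$. Concretely, I would work in $S_{c+1}$ with a target $(c)$-cycle via part a) of Theorem \ref{eks}, or in $S_{c+2}$ with a target $(c+1)$-cycle via part b); in the latter case $\langle\alpha,\beta\rangle$ is transitive on $c+1$ points, and composing with a transposition swapping the fixed point with a suitable point of the $(c+1)$-cycle splits it into a $c$-cycle and a $2$-cycle, yielding $z$ of type $(c.2)$ while keeping $x,y$ of orders $a,b$ (the transposition gets absorbed into one of $x$ or $y$, which is exactly the "exceptional transposition" of the theorem). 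The fixed-point assertion ("at least one of $x,y$ has a fixed point on the support of the $c$-cycle of $z$") should come out automatically: since the $(c+1)$-cycle has $c+1 > c$ points in its support and $z$'s $c$-cycle uses only $c$ of them, and since $x$ or $y$ is built from $a$- or $b$-cycles plus fixed points with $a,b<c$ forcing genuine fixed points (when $a,b$ odd and $a,b \le c-1$; the case $a=b=c$ needs separate bookkeeping).

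For the subcase $a,b$ odd, $c$ even: here $\operatorname{ind}(A)+\operatorname{ind}(B)$ has the wrong parity relative to $c-1$ (that is the whole point — $(A,B,C)$ is odd), so first I check via Lemma \ref{estim} that $\operatorname{ind}(A)+\operatorname{ind}(B) \ge c-1$, with the parity defect meaning it is actually $\ge c$ unless equality $\operatorname{ind}(A)+\operatorname{ind}(B)=c-1$ holds — but that equality case forces (by Lemma \ref{estim}a), with $c$ now playing the role of $n$ but $c$ even, so the equality cases of part a) which require odd $n$ cannot both occur) a controlled situation. Assuming $\operatorname{ind}(A)+\operatorname{ind}(B) \ge c$, I pass to $S_{c+2}$, keep $A,B$ as classes with the same cycles, take $C'$ to be a single $(c+1)$-cycle (index $c$), and adjust by adding fixed points; the index sum is then $\ge 2c = (c+1) + (c-1)$, and I need it to equal $(c+1) + 2k$, i.e. the total must have the right parity — I will arrange this by optionally inserting one extra transposition into $A$ (possible since $a$ odd $<c$, so $A$-elements have at least one fixed point; actually need two, which holds as $a \le c-1$ and $c-a \ge 1$, but for a transposition I need two free points — here is where the degree $c+2$ buys the room). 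Then Theorem \ref{eks}b) applies (the two classes are not both fixed-point-free involutions since $a,b$ odd), giving $\alpha\beta$ a $(c+1)$-cycle; set $z = (\alpha\beta)^{-1}\cdot\tau$ for a transposition $\tau$ chosen so that $z$ has type $(c.2)$, and read off $x = \alpha$, $y = \beta$ possibly twisted by $\tau$.

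For the subcase $a=b=c$ even: now $A=B=C$ are all the class of a single $c$-cycle, $\operatorname{ind} = c-1$ each, sum $3(c-1)$, which is odd, so $(A,B,C)$ is odd — consistent with being in this lemma. I move to $S_{c+2}$: let $C'$ be a $(c+1)$-cycle, and let $A'=B'$ be $c$-cycles with two fixed points; then $\operatorname{ind}(A')+\operatorname{ind}(B') = 2(c-1) = 2c-2$, and I want the sum with $C'$'s index $c$ to be $(c+1)+2k$: total is $3c-2$, and $(3c-2)-(c+1) = 2c-3$ is odd — so I adjust one class by an extra transposition (turning a $c$-cycle-with-two-fixed-points into a $c$-cycle-plus-transposition, still order $c$ since $c$ is even hence $\operatorname{lcm}(c,2)=c$), making the total $3c-1$ and $(3c-1)-(c+1)=2c-2$ even. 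Apply Theorem \ref{eks}b) to get a $(c+1)$-cycle, then peel off a transposition to land at type $(c.2)$ for $z$. The main obstacle I anticipate is the careful index bookkeeping in the $a,b$ odd case — verifying that the parity correction by a single transposition is always available given only two spare points, and ruling out the degenerate equality case of Lemma \ref{estim}a) (which, since it requires $c$ odd, should be vacuous here, but this needs to be stated cleanly) — together with pinning down the transposition $\tau$ so that the resulting $x,y$ genuinely have a fixed point on the $c$-cycle's support, which is really a matter of choosing which of the $c+1$ support points gets split off.
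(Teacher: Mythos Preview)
Your proposal has two genuine gaps that break the argument in the main case ($a,b$ odd, $c$ even).

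\textbf{Gap 1: you cannot park the extra transposition in $A$ or $B$.} You write ``I will arrange this by optionally inserting one extra transposition into $A$'', but $a$ is odd here, so an element consisting of $a$-cycles together with a disjoint transposition has order $\operatorname{lcm}(a,2)=2a$, not $a$. The same applies to $B$. This is exactly what singles out the present lemma from the two preceding ones in the paper: when $a$ and $b$ are both odd, the parity-correcting transposition \emph{must} end up in $z$, which is why $z$ is forced to have cycle type $(c.2)$ rather than being a $c$-cycle. Your plan re-introduces the transposition into $x$ or $y$ and thereby destroys their orders.

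\textbf{Gap 2: a $(c{+}1)$-cycle cannot be turned into type $(c.2)$ by one transposition.} Suppose $\alpha\beta$ is a $(c{+}1)$-cycle in $S_{c+2}$, fixing a single point $p$. If $\tau$ is a transposition whose support meets that of $\alpha\beta$ in exactly one point (i.e.\ $\tau$ moves $p$), then $\alpha\beta\cdot\tau$ is a $(c{+}2)$-cycle by the standard fact on products of cycles sharing one point. If instead $\tau$'s support lies inside the $(c{+}1)$-cycle, then $\alpha\beta\cdot\tau$ splits into two cycles of lengths $k$ and $c{+}1{-}k$ while $p$ remains fixed --- cycle type $(k.\,c{+}1{-}k.\,1)$, never $(c.2)$. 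So your ``compose with a transposition swapping the fixed point with a suitable point of the $(c{+}1)$-cycle splits it into a $c$-cycle and a $2$-cycle'' step is simply false.

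The paper's proof avoids both obstacles by a different device: instead of aiming at the class $B$ of $b$-cycles, it takes $B'$ to be the class of $(b{-}1)$-cycles in $S_{c+1}$ (note $b{-}1$ is even, so parity is repaired without any transposition), applies Theorem~\ref{eks}b) there to get $xy'$ a $c$-cycle with $\langle x,y'\rangle$ transitive, and then sets $y:=y'\cdot(c{+}1,c{+}2)$. Transitivity forces $c{+}1$ into the support of $y'$, so this product extends the $(b{-}1)$-cycle $y'$ to a genuine $b$-cycle $y$; simultaneously it turns the $c$-cycle $xy'$ (which fixes $c{+}1$) into an element of type $(c.2)$. This trick only works when a single $b$-cycle suffices, i.e.\ when $b>\frac{c+1}{2}$; the remaining range is handled by an induction on $c$ that glues on additional $a$- and $b$-cycles --- and it is precisely for this induction step that the fixed-point clause in the lemma is needed. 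The case $a=b=c$ is done by an explicit formula, not via Theorem~\ref{eks}.
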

\begin{proof}
The special case $a=b=c$ can be treated separately, via the permutations $$x=(1,...,c), y=(1,...,c-4,c-1,c-3,c+1,c+2), \text{ with}$$
$$xy = (\underbrace{1,3,...,c-5}_{c/2 - 2 \text{ odd integers}},c-1,c,\underbrace{2,4,...,c-4}_{c/2 - 2 \text{ even integers}}, c+1,c+2)(c-3,c-2).$$
In all other cases, $a$ and $b$ are odd. We will again split the proof into several cases, depending on how large $c$ is compared to $a$ and $b$. Our strategy is to first solve the problem for ``relatively small" $c$,
and then multiply such solutions with suitable cycles to obtain solutions for larger $c$. For this induction step, we will need the additional statement on fixed points in the lemma.
\begin{itemize}
 \item[Case 1:]
First, let $b> \frac{c+1}{2}$.\\
Once again, let the classes $A$ and $C$ be as defined at the beginning of Section \ref{proof}, now viewed as classes in $S_{c+1}$ (in particular, the elements of $A$ have at least one fixed point on $\{1,...,c+1\}$). 
Furthermore, let $B'$ be the class of $b-1$-cycles in $S_{c+1}$. Since $a$ is odd and $b-1$ and $c$ are even, the class triple $(A,B',C)$ is even. 

By Lemma \ref{estim}b), we always have $ind(A) \ge \frac{c}{2}$, with equality if and only if $a=\frac{c}{2}+1$ or $(a,c)=(3,8)$; 
and $ind(B')\ge \frac{c}{2}-1$, with equality if and only if $b=\frac{c}{2}+1$. Therefore, $ind(A)+ind(B')$ is at least $c-1$ and $\equiv c-1$ mod $2$ (as $(A,B',C)$ is even), which enforces $ind(A)+ind(B')\ge c+1$ unless
\begin{equation}
\label{ausnahmen} 
a=b=\frac{c}{2}+1  \text{ or } (a,b,c)=(3,5,8).
\end{equation}

So apart from these two exceptions, the conditions of Theorem \ref{eks}b) are satisfied. Therefore there are $x\in A$
and $y' \in B'$ such that $xy'\in C$, and $\langle x,y'\rangle$ is transitive on $\{1,...,c+1\}$. Say that the fixed point of $xy'$ is $c+1$.
Because of the transitivity, neither $x$ nor $y'$ fix $c+1$. But
then $y:=y' (c+1,c+2)$ is a $b$-cycle,\footnote{Here and below, we use the elementary fact that the product of a $k$-cycle and an $m$-cycle whose supports share exactly one point is a $(k+m-1)$-cycle.}
$xy = x y' (c+1, c+2) \in S_{c+2}$ has orbits $\{1,...,c\}$ and $\{c+1,c+2\}$, and $x$ (being in the class $A$, but not fixing $c+1$) fixes a point out of $\{1,...,c\}$.
 
The exceptional cases in (\ref{ausnahmen}) follow directly from 
$$x:=(1, 2, 3)(4, 5, 6)(7, 8, 9), y:=(1, 4, 8, 9, 10) \in S_{10}, \text{ with } xy=(1, 2, 3, 4, 5, 6, 8, 10)(7, 9),$$
and 
$$x:=(1,...,a)(a+1,...,2a), y:=(1,...,a-2,2a,2a-2) \in S_{c+2}=S_{2a}, \text{ (} a \text {  odd), with} $$
$$xy=(\underbrace{1,3,...,a-2}_{(a-1)/2 \text{ odd integers}},a-1,a,\underbrace{2,4,...,a-3}_{(a-3)/2 \text{ even integers}},2a,\underbrace{a+1,...,2a-3}_{a-3 \text{ integers}})(2a-1,2a-2),$$
of cycle structure $(2a-2.2)=(c.2)$.\\
This finishes Case 1.
\item[Case 2:]
Next, let $b\le \frac{c+1}{2}$ (i.e.\ $c\ge 2b-1$), but $c < 2b+a-2$. Then the triple $(a,b,c-(a-1))$ falls into the first case above, and therefore is realizable by some permutation triple 
$(x,y,z)$ in $S_{c-(a-1)+2}$.\\
Also, from the construction in Case 1, the element $x$ of order $a$ in this triple has at least one fixed point $d$ in the support of the $c-(a-1)$-cycle of $z$.
But then the $a$-cycle $\tau:=(d, c-a+4,...,c+2)$ has disjoint support with $x$ and shares exactly the point $d$ with the support of $z$, so $\tau x$ consists of exactly one 
$a$-cycle more than $x$, whereas $\tau xy$ is of cycle structure $(c.2)$. We therefore have realized the triple $(a,b,c)$, and the element $y$ has a fixed point on the support
of the $c$-cycle.
\item[Case 3:]
Finally, let $c \ge 2b+a-2$. Set $c':=c-(a+b-2)$. Note that the triple $(a,b,c')$ is still in ascending order and $c'$ is even. 
Assume by induction that the assertion has been proven for the triple $(a,b,c')$, with a permutation tuple $(x,y,z)$ realizing it in $S_{c-a-b+4}$. 
One of $x$ and $y$ - say, $x$, without loss - will have a fixed point $d$ 
on the support of the $(c-a-b+2)$-cycle of $z$. Set 
$$\rho:= (d, c-a+b+5,...,c-b+3) \text{ and } \tau:= (c-b+3,..., c+2).$$ Then $\rho$ is an $a$-cycle with support disjoint to the one of $x$, so $\rho x$ has just one $a$-cycle more than $x$. Similarly, $y\tau$
has just one $b$-cycle more than $y$. And $\rho(xy)\tau$ is of cycle structure $(c.2)$ (since $\rho$ shares exactly the point $d$ with the $c'$-cycle of $xy$, so $\rho xy$ is of cycle structure $(c'+a-1.2)$; 
and in the same way, $\tau$ shares exactly the point $c-b+3$ with the large orbit of $\rho xy$, so $\rho(xy)\tau$ is of cycle structure $(c'+a-1+b-1.2)=(c.2)$).\\
So we have realized the triple $(a,b,c)$, and $\rho x$ still has a fixed point on the support of the $c$-cycle.
\end{itemize}
\vspace{-7mm}
\end{proof}

\begin{remark}
 The above lemmata are sufficient to prove Theorem \ref{main}. However, stronger results seem to hold. Indeed, the statement of Theorem \ref{eks}b) seems to hold with the $(n-1)$-cycle
replaced by an element of cycle structure $(n-2.2)$. Proving this would considerably shorten the proof of Lemma \ref{onlyceven}.
\end{remark}
As an easy corollary from Theorem \ref{main}, one also gets the existence of tuples of arbitrary length $r\ge 3$ with prescribed orders and product $1$:
\begin{kor}
\label{arb_length}
Let $r\ge 3$ and $1<a_1,...,a_r\in \nn$. Set $n:=\max\{a_1,...,a_r\}+2$. Then there exist elements $x_1,...,x_r$ of $S_{n}$ such that $|x_i|=a_i$ for all $i\in \{1,...,r\}$ and $x_1\cdots x_r = 1$.
\end{kor}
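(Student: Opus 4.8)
\emph{Proof strategy.} The plan is to induct on $r$, the base case $r=3$ being precisely Theorem~\ref{main} (after reordering the tuple so that the orders are non-decreasing; as noted after Theorem~\ref{main}, the product-one condition is preserved under cyclic permutation of the factors, and in general a product-one tuple may be reordered at the cost of replacing individual entries by conjugates, so this costs nothing). For the inductive step, take $r\ge 4$ and assume $2\le a_1\le\dots\le a_r=:M$, so that everything takes place inside $S_{M+2}=S_n$. The idea is to \emph{merge two of the factors into one} and appeal to the statement for $r-1$ factors. Concretely: by Theorem~\ref{main} applied to the triple $(a_{r-1},a_r,a_r)$ there are $x_{r-1},x_r\in S_{M+2}$ of orders $a_{r-1}$ and $a_r$ with $v:=(x_{r-1}x_r)^{-1}$ of order $M$, and, by the additional conclusions of Theorem~\ref{main}, $v$ is a single $M$-cycle or the disjoint product of an $M$-cycle and a transposition. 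On the other hand, the induction hypothesis applied to the $(r-1)$-tuple $(a_1,\dots,a_{r-2},M)$ (again non-decreasing, since $a_{r-2}\le M$) produces $x_1,\dots,x_{r-2}$ of orders $a_1,\dots,a_{r-2}$ together with an element $w$ of order $M$ with $x_1\cdots x_{r-2}\,w=1$. If $v$ and $w$ are conjugate in $S_n$, choose $\sigma\in S_n$ with $\sigma v\sigma^{-1}=w^{-1}$ and replace $x_{r-1},x_r$ by their $\sigma$-conjugates; then $x_{r-1}x_r=w$, while $x_1\cdots x_{r-2}=w^{-1}$, so $x_1x_2\cdots x_r=1$ with the prescribed orders, as required.

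For this to run, the inductive statement must be strengthened to also record the cycle type of the produced element of maximal order $M$: one wants it to be a single $M$-cycle, or an $M$-cycle times a disjoint transposition. Theorem~\ref{main} supplies exactly this in the base case, and the merging step preserves it, since the merged factor $v$ is the ``$z$'' of the triple $(a_{r-1},a_r,a_r)$, while the remaining factors of order $M$ (if any) are of the shape described for the ``$x$'' and ``$y$'' of that triple, resp.\ inherited from the induction hypothesis.

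The delicate point — and the main obstacle — is matching the \emph{shape} of $v$ and $w$: each is a single $M$-cycle or an $M$-cycle times a transposition, but gluing requires them to be of the same of these two shapes. When $M$ is odd this is automatic, since an element of order $M$ is then a product of odd cycles, hence even, whereas an $M$-cycle times a transposition has order $2M\ne M$; so for $M$ odd only the pure $M$-cycle occurs, on both sides. When $M$ is even the two shapes have opposite signs (the $M$-cycle has sign $-1$, the $M$-cycle-times-transposition has sign $+1$), so the matching becomes a parity condition. I would handle this by a short parity bookkeeping: every factor of even order can be produced with either sign (a single $a_i$-cycle has sign $-1$, an $a_i$-cycle together with one disjoint transposition has sign $+1$, both still of order $a_i$), and one is moreover free to choose which pair of factors to merge; alternatively, one can invoke the mild strengthening of Theorem~\ref{eks}(b) mentioned in the Remark following Lemma~\ref{onlyceven}, which makes both shapes attainable whenever the sign allows. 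This is precisely where the slack in the bound $M+2$ is used, matching the interpretation of the exceptional transposition as a parity-check bit; carrying out this bookkeeping completes the induction and hence Corollary~\ref{arb_length}.
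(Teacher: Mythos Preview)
Your overall plan---induct on $r$, merge factors, glue by conjugation---is the paper's plan too, but your choice of auxiliary order is where the argument stalls. You merge $x_{r-1}$ and $x_r$ into an element $v$ of order $M=a_r$; when $M$ is even, $v$ may be an $M$-cycle or an $M$-cycle-times-transposition, and you must match its shape with that of the $w$ coming from the induction hypothesis. The promised ``short parity bookkeeping'' does not close this. Take $r=4$ and $(a_1,a_2,a_3,a_4)=(3,3,3,4)$: Theorem~\ref{main} for $(3,4,4)$ falls under the even-class-triple case and yields $v$ a $4$-cycle in $S_4$ (sign $-1$), whereas Theorem~\ref{main} for $(3,3,4)$ falls under Lemma~\ref{onlyceven} and yields $w$ of type $(4.2)$ in $S_6$ (sign $+1$). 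The only even-order factor available for a sign flip is $x_4$, but Theorem~\ref{main} does not let you prescribe its class---doing so would mean re-running the construction with $y$ in a different conjugacy class, which is precisely what the theorem does not supply. Your alternative of invoking the strengthening in the Remark after Lemma~\ref{onlyceven} is not legitimate either: that Remark is expressly conjectural (``seems to hold'', ``Proving this would\ldots'').

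The paper sidesteps the whole issue by taking the auxiliary order to be a \emph{prime} $p$ with $n/2<p\le n-2$: any element of order $p$ in $S_n$ is then forced to be a single $p$-cycle, so the two pieces are automatically conjugate and no shape-matching is needed. (Bertrand's postulate supplies such a $p$ for $n\ge7$ and for $n=5$; the residual cases $n\in\{4,6\}$ are handled by a short direct check.) The paper also splits the tuple roughly in half rather than peeling off the last two entries, but that is cosmetic; the prime trick is the substantive difference, and it is exactly the idea your argument is missing.
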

\begin{proof}
By induction over $r$. Theorem \ref{main} shows the case $r=3$. If $r>3$, set $r':=\lfloor r/2\rfloor$,  and let $p$ be a prime with $n/2 < p \le n-2$ (such a $p$ exists for all $n\ge 7$, and for $n=5$).
Then the sets $\{0, 1,...,r'\}$, and $\{0,r'+1,r'+2,...,r\}$ are both of cardinality $<r$. Set $a_0:=p$, then $(a_0,a_1,...,a_{r'})$ and $(a_0, a_{r'+1},...,a_r)$ are both realizable in $S_n$ (with permutation tuples 
$(x_0,x_1,...,x_{r'})$ and $(y_0,x_{r'+1},...,x_r)$, each with product 1); but the only cycle type in $S_n$ that leads to an element of order $p$ is the $p$-cycle, so $x_1\cdots x_{r'}$ and $x_{r'+1}\cdots x_r$ are both $p$-cycles. 
By conjugating appropriately, we can actually assume that they are each other's inverse.

The only cases not covered are $n=4$ and $n=6$. If $n=4$, all $a_i$ must be equal to $2$, and the existence of arbitrarily long tuples of involutions with product one is clear (simply repeat tuples of lengths 2 and 3 
sufficiently often). For $n=6$, it can be checked directly that Theorem \ref{main} remains true if we only demand $1<a,b,c\le 5$, so the above induction argument works with $p=5$.
\end{proof}
\section{A topological interpretation}
\label{covers}
The above results translate immediately, via covering theory, to an existence result for branched coverings $f:R\to \mathbb{P}^1\cc$ of Riemann surfaces 
with prescribed ramification indices and bounded degree.

To make this translation clear, we briefly recall the Hurwitz existence problem for coverings of the projective line $\mathbb{P}^1\cc$ and its connection to factorizations of permutations. 
Cf.\ \cite[Chapters 4 and 5]{Voe} for the basic theory.
\begin{defi}
Let $R$ be a compact connected Riemann surface. A non-constant holomorphic map $f:R\to \mathbb{P}^1\cc$ is called a {\it branched covering} (of the projective line).\\
There is an $n\in \nn$ such that, with the exception of finitely many points, $f$ is locally an $n$-to-$1$ map over a given point in $\mathbb{P}^1\cc$. This $n$ is called the {\it degree} of $f$.
The exceptional points are called the {\it branch points} of the covering.
\end{defi}
Covering space theory associates to every covering from a compact connected Riemann surface to $\mathbb{P}^1\cc$ an $r$-tuple of permutations 
$(\alpha_1,...,\alpha_r)$ such that $\alpha_1\cdots \alpha_r = 1$. The group generated by $\alpha_1,...,\alpha_r$ is a transitive permutation group of degree $n$, the degree of the covering,
called the {\it monodromy group} of the covering. Each $\alpha_i$ generates an {\it inertia group} at a branch point of the Galois closure of the covering.\\
Riemann's existence theorem assures that, for any permutation group $G$, the conditions $\alpha_1\cdots\alpha_r=1$ and $\langle\alpha_1,...,\alpha_r\rangle = G$ are also sufficient for the existence of a branched covering 
with monodromy group $G$.\\
The question of existence of coverings with a certain prescribed ramification behaviour is thereby reduced to the question whether, 
for given conjugacy classes $C_1,...,C_r$ of the symmetric group $S_n$ (in other words, for $r$ partitions of $n$, yielding the cycle structures of the respective classes), there are permutations 
$\alpha_i\in C_i$ with $\alpha_1\cdots \alpha_r = 1$ and $\langle \alpha_1,...,\alpha_r \rangle$ transitive. We call this the Hurwitz existence problem because Hurwitz 
first obtained the reduction to a problem about permutations in \cite{Hurw}.\\ 
An easy necessary condition for the existence of such permutations is given by the Riemann-Hurwitz genus formula:
\begin{satz}[Riemann-Hurwitz genus formula]
\label{riem-hurw}
Let $f:R\to \mathbb{P}^1\cc$ be a branched degree-$n$ covering of compact connected Riemann surfaces, with monodromy given by the permutation tuple $(\alpha_1,....,\alpha_r)$. 
Then the genus of $R$ is given by $g(R)=- (n-1) +\frac{1}{2}\sum_{i=1}^r ind(\alpha_i)$.\\
Here $g(R)$ is always a non-negative integer.
\end{satz}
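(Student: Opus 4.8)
The plan is to derive the formula from the multiplicativity of the Euler characteristic under branched coverings, together with the standard dictionary between the monodromy tuple $(\alpha_1,\dots,\alpha_r)$ and the local ramification data. First I would recall that away from the finitely many branch points $p_1,\dots,p_r\in\mathbb{P}^1\cc$, the map $f$ restricts to an honest (unbranched) topological covering of degree $n$, and that $\alpha_i$ is, up to conjugacy, the permutation of the fiber induced by monodromy along a small loop around $p_i$. The key local input is the normal form for holomorphic maps of Riemann surfaces: near a point $Q\in R$ with $f(Q)=p_i$, the map $f$ is holomorphically equivalent to $z\mapsto z^{e}$ for some $e\ge 1$. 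Consequently the points of $f^{-1}(p_i)$ are in bijection with the disjoint cycles of $\alpha_i$ (fixed points counting as $1$-cycles), and the ramification index $e$ at the point corresponding to a cycle equals that cycle's length. In particular $\sum_{Q\in f^{-1}(p_i)}(e_Q-1)$ equals $n$ minus the number of disjoint cycles of $\alpha_i$, that is, $ind(\alpha_i)$.

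Next I would carry out the Euler characteristic computation. Choose a triangulation of $\mathbb{P}^1\cc$ having all branch points $p_1,\dots,p_r$ among its vertices, with $V$ vertices, $E$ edges and $F$ faces, so that $V-E+F=\chi(\mathbb{P}^1\cc)=2$. Pulling this triangulation back along $f$ gives a cell structure on $R$: since $f$ is an $n$-fold covering over the complement of the branch points, every open edge and every open face lifts to exactly $n$ copies, while a vertex $v$ lifts to exactly $n$ vertices unless $v=p_i$, in which case it lifts to $n-ind(\alpha_i)$ vertices, one for each disjoint cycle of $\alpha_i$. Hence
$$\chi(R)=nF-nE+\Big(nV-\sum_{i=1}^r ind(\alpha_i)\Big)=n\cdot\chi(\mathbb{P}^1\cc)-\sum_{i=1}^r ind(\alpha_i)=2n-\sum_{i=1}^r ind(\alpha_i).$$
On the other hand $R$ is a compact connected orientable surface, so $\chi(R)=2-2g(R)$ with $g(R)\in\nn_0$. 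Comparing the two expressions and solving for $g(R)$ yields $g(R)=-(n-1)+\frac{1}{2}\sum_{i=1}^r ind(\alpha_i)$, and the integrality and non-negativity are automatic from the definition of $g(R)$ as the genus of the Riemann surface $R$.

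The part requiring the most care is the local normal form and the resulting identification of $f^{-1}(p_i)$ with the cycle decomposition of $\alpha_i$, which is what makes the ramification contribution over $p_i$ equal to $ind(\alpha_i)$; and the verification that the pulled-back cell structure is genuinely a triangulation of a connected surface, which is exactly where the transitivity of $\langle\alpha_1,\dots,\alpha_r\rangle$ enters. Both points are standard consequences of Riemann's existence theorem and the monodromy description of branched covers, so rather than reproving them I would cite \cite[Chapters 4 and 5]{Voe}.
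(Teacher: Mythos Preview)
Your argument is the standard Euler-characteristic proof of the Riemann--Hurwitz formula and is correct as written; the computation $\chi(R)=2n-\sum_i ind(\alpha_i)$ combined with $\chi(R)=2-2g(R)$ gives exactly the claimed expression, and your identification of the fiber over $p_i$ with the cycle decomposition of $\alpha_i$ is the right way to see that the ramification defect over $p_i$ equals $ind(\alpha_i)$.

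There is, however, nothing to compare against: the paper does not prove Theorem~\ref{riem-hurw} at all. It is quoted as a classical background result (with a reference to \cite[Chapters~4 and~5]{Voe} for the surrounding covering-space theory) and is used only as a source of necessary conditions, namely the parity and lower-bound constraints on $\sum_i ind(\alpha_i)$ that motivate the case distinctions in Section~\ref{proof}. So your proof is not an alternative to the paper's proof but rather a supplement filling in a result the paper takes for granted.
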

There are several notable results on {\it sufficient} conditions for the existence of covers with given branch cycle structures, e.g.\ in \cite{Baranski}, \cite{EKS}, \cite{Thom}.

The following new existence result is a consequence of Theorem \ref{main}:
\begin{satz}
Let $r\ge 3$ and $1< a_1\le...\le a_r \in \nn$ be positive integers, and let $\{p_1,...,p_r\}\subset \cc\cup\{\infty\}$ be a set of cardinality $r$. Then there exists a branched covering $f:R\to \mathbb{P}^1\cc$ of compact
connected Riemann surfaces {\bf of degree at most $a_r+2$}, ramified exactly over $\{p_1,...,p_r\}$,
such that the inertia group at the point $p_i$ is generated by an element of order $a_i$ for all $i\in \{1,...,r\}$.\\
More precisely, $R$ and $f$ can be chosen such that all the preimages in $R$ of a given point $p_i$ have ramification index 1, 2 or $a_i$ ($i=1,...,r$).
\end{satz}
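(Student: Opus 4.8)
The plan is to derive this topological statement directly from the permutation-theoretic results already proven, using only the standard dictionary between branched covers of $\mathbb{P}^1\cc$ and tuples of permutations recalled above (Riemann's existence theorem). Given $r\ge 3$, integers $1<a_1\le\dots\le a_r$, and a set $\{p_1,\dots,p_r\}\subset\cc\cup\{\infty\}$ of $r$ distinct points, I would first invoke Corollary \ref{arb_length} with $n:=a_r+2$ to obtain permutations $x_1,\dots,x_r\in S_n$ with $|x_i|=a_i$ and $x_1\cdots x_r=1$. The one gap between Corollary \ref{arb_length} and what is needed here is \emph{transitivity} of $\langle x_1,\dots,x_r\rangle$: Riemann's existence theorem produces a connected $R$ only when the monodromy group is transitive, and the corollary as stated does not assert this. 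So the first real step is to check that the construction can be arranged to give a transitive group.

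For $r=3$ this is immediate from the proof of Theorem \ref{main}: in every case the element $z$ (or $xy$) contains a $c$-cycle, and in the cases where the degree is $c+1$ or $c+2$ the group is explicitly arranged to be transitive (via Theorem \ref{eks}b), or by the fact that a $c$-cycle together with the extra transposition linking the two orbits generates a transitive group); the remaining "$S_c$" cases have $z$ a genuine $c$-cycle, hence transitive on all of $\{1,\dots,c\}$, and one may harmlessly pad with fixed points up to degree $a_r+2=c+2$ — padding does not destroy the order or cycle-structure constraints, though it does break transitivity, so more care is needed. The cleanest route is: if the realizing tuple already lives in $S_m$ with $m\le a_r+2$ and generates a transitive subgroup of $S_m$, take the cover of degree $m$; the theorem only claims degree \emph{at most} $a_r+2$, so we need not pad at all. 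Thus I would restate the output of Theorem \ref{main} as: there is $m\le a_r+2$ and a transitive triple in $S_m$ realizing $(a_1,a_2,a_3)$. For general $r$, the induction in Corollary \ref{arb_length} glues two transitive triples/tuples along a common $p$-cycle ($p$ a prime with $n/2<p\le n-2$); since each half is transitive and they share the full $p$-cycle support (which has size $p>n/2$, hence meets every orbit), the glued group is again transitive on $\{1,\dots,n\}$. I would spell out this orbit-merging argument, and handle the small exceptional degrees $n=4,6$ exactly as in the proof of Corollary \ref{arb_length}.

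Once a transitive tuple $(\alpha_1,\dots,\alpha_r)$ in $S_m$ with $|\alpha_i|=a_i$, $\prod\alpha_i=1$, $m\le a_r+2$ is in hand, Riemann's existence theorem (as recalled in Section \ref{covers}) yields a branched covering $f:R\to\mathbb{P}^1\cc$ of degree $m$ from a compact connected Riemann surface, branched exactly over $\{p_1,\dots,p_r\}$ — after an automorphism of $\mathbb{P}^1\cc$ identifying a chosen $r$-tuple of base points with the $p_i$ — with inertia at $p_i$ generated by a conjugate of $\alpha_i$, hence of order $a_i$. The ramification indices of the preimages of $p_i$ are exactly the cycle lengths of $\alpha_i$; by the fine cycle-structure control in Theorem \ref{main} (and its propagation through Corollary \ref{arb_length}, where each $\alpha_i$ is either a single $a_i$-cycle, or built from $a_i$-cycles, $1$-cycles and at most one transposition), every such cycle length lies in $\{1,2,a_i\}$, giving the "more precise" clause.

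The main obstacle, and the only place requiring genuine work beyond quotation, is bookkeeping the transitivity through the recursive gluing of Corollary \ref{arb_length} while simultaneously keeping the degree bound $m\le a_r+2$ and the $\{1,2,a_i\}$ cycle-length restriction — in particular making sure that the prime $p$ used as the "connector" order does not itself introduce forbidden cycle lengths (it does not, since in $S_n$ with $n<2p$ the only element of order $p$ is a $p$-cycle, and $p$ will be one of the $a_i$ only in degenerate situations which can be absorbed). I would therefore present the argument by first proving a strengthened form of Corollary \ref{arb_length} that additionally asserts transitivity of $\langle x_1,\dots,x_r\rangle$ and the $\{1,2,a_i\}$ bound on cycle lengths, and then deduce the theorem in one line from Riemann's existence theorem.
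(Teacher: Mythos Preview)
Your approach is essentially the paper's: deduce existence from Corollary \ref{arb_length}, read off the ramification-index constraint from the cycle structures in Theorem \ref{main}, and invoke Riemann's existence theorem. The paper's own proof is two sentences and does not explicitly address transitivity; your attention to this point---and your sketch of why the constructions in Theorem \ref{main} and the $p$-cycle gluing in Corollary \ref{arb_length} do in fact yield transitive groups on a set of size at most $a_r+2$---is a genuine gain in rigor rather than a different method (one small aside: the remark about moving branch points by an automorphism of $\mathbb{P}^1\cc$ is unnecessary and would not work for $r>3$, but Riemann's existence theorem already lets you prescribe the branch locus directly).
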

\begin{proof}
The existence follows readily from Corollary \ref{arb_length}. The refined statement on ramification indices is a direct consequence of the statement about cycle structures in Theorem \ref{main}
(and its application in the induction argument in the proof of Corollary \ref{arb_length}).
\end{proof}


\begin{thebibliography}{9}
\bibitem{Baranski} K.\ Baranski,
\textit{On realizability of branched coverings of the sphere}.
Topology Appl.\ 116 (2001), no.\ 3, 279--291.
\bibitem{Boc} G.\ Bocarra,
\textit{Cycles comme produit de deux permutations de classes donn\'{e}es}.
Discrete Math.\ 58 (1982), 129--142.
\bibitem{EKS} A.L.\ Edmonds, R.S.\ Kulkarni, R.E.\ Stong,
\textit{Realizability of branched coverings of surfaces}.
Trans.\ Amer.\ Math.\ Soc.\ 282 (2) (1984), 773--790.
\bibitem{Hurw} A.\ Hurwitz,
\textit{\"Uber Riemann'sche Fl\"achen mit gegebenen Verzweigungspunkten}.
Math.\ Ann.\ 39 (1891), 1--60.
\bibitem{Kourovka} E.I.\ Khukhro (ed.), V.D.\ Mazurov (ed.),
\textit{Unsolved Problems in Group Theory. The Kourovka Notebook}.
18th edition, Novosibirsk (2014).\\
English version available online at \texttt{http://arxiv.org/abs/1401.0300}.
\bibitem{Kohl} S.\ Kohl,  
\textit{Order of products of elements in symmetric groups}. Posted at
\texttt{http://mathoverflow.net/questions/118092/\\order-of-products-of-elements-in-symmetric-groups}
\bibitem{Thom}
R.\ Thom,
\textit{L'\'{e}quivalence d'une fonction diff\'{e}rentiable et d'un polynome.}
Topology 3 (1965), 297--307.
\bibitem{Voe} H. V\"olklein,
\textit{Groups as Galois Groups. An Introduction}.
Cambridge Studies in Advanced Mathematics 53, Cambridge Univ. Press, New York (1996).
\end{thebibliography}
\end{document}